\documentclass[a4paper]{article}
\usepackage{graphicx}
\usepackage{amsmath,amssymb}
\usepackage{caption}
\usepackage{subfig}
\newcommand{\ba}{\begin{array}}
\newcommand{\ea}{\end{array}}

\newtheorem{theorem}{Theorem}[section]
\newtheorem{lemma}[theorem]{Lemma}

\def\be{\begin{equation}}
\def\ee{\end{equation}}
\def\bea{\begin{eqnarray}}
\def\eea{\end{eqnarray}}
\def\a{\alpha}

\def\b{\beta}
\def\g{\gamma}
\def\d{\delta}
\def\l{\lambda}
\def\om{\omega}
\def\f{\frac}
\def\t{\hbox}

\def\q{\quad}
\def\ep{\varepsilon}
\def\s{\sigma}
\def\E{\tilde{E}}
\def\F{\mathcal{F}}

\begin{document}

\centerline{\bf A layer stripping method for numerical solution}

\centerline{\bf of the GPR problem in a layered medium}
\vspace{1cm}

\centerline{Ruben Airapetyan\footnote{Email: rhayrape@kettering.edu}}

\centerline{Kettering University, Flint 48504, USA}

\begin{abstract}
A numerical method for processing the data of ground penetrating radars for a piece-wise continuous layered medium is proposed. The method combines the layer stripping technique with numerical continuation of data into the complex frequency's domain. The accuracy of the method is analyzed. Error estimates are obtained and numerical testing is performed. They demonstrate numerical efficiency of the method under certain assumptions such as: electrical characteristics inside each layer change slowly, the thicknesses of the layers are at least of the order of the wavelength, conductivity of the medium is not high.
\end{abstract}

\noindent {\bf Keywords:}
Ground penetrating radar, piece-wise continuous layered medium, layer stripping, complex frequency domain, analytic continuation.

\noindent {\bf AMS Classification:}
65M32  35R30  86A22 34A55

\section{Introduction.}

In this paper a numerical method for solution of an inverse problem of the theory of ground penetrating radars is proposed. The ground penetrating radar system consists of a transmitter located over the surface and a receiver located on the surface. A pulsed electromagnetic wave is transmitted. The receiver measures the electric field. The goal of the GPR is to recover the electric  characteristics of the medium under the surface (\cite{dan}--\cite{persico}). 
Processing GPR data is an ill-posed problem that can be solved numerically only under some special assumptions about the medium. We refer to \cite{romkab} and citations there for detailed description of ill-posed and inverse problems for Maxwell's equations which aim to recover the electromagnetic material properties of a body from measurements on the boundary. There are many research articles where numerical solutions of the GPR problem are presented.   Most recent publications describe methods that are based on  minimizing the difference between the output calculated for the predicted model and the required output (\cite{knss,nag}). However, because of ill-posedness of the inversion problem this optimization problem requires regularization. For the full-waveform inversion method (\cite{bkv}--\cite{repp}) different regularization procedures are proposed (see \cite{llcw} and references there).  These methods employ numerical solvers for the corresponding direct problem and regularized optimization algorithms. Numerical results testify efficiency of these methods, however, they require good initial approximation, otherwise they may converge to a local minimum. Layer stripping methods represent an alternative approach to solution of GPR inverse problems. They do not require an initial approximation and computationally are less expensive. They can also be combined with the full-waveform inversion methods providing an initial approximation.

For a layered medium the values of the electric and magnetic components of the field on the top of every layer are sums of incident waves and waves reflected from the interfaces of the layers. In layer stripping methods one starts with the first layer, finds its electric characteristics, and then evaluates fields at the top of the next layer (\cite{aa}--\cite{
sylv2}). Then one repeats the same procedure for next layers. However, in a real frequency domain the inputs of reflected waves make considerable contribution to the total field and cannot be neglected. To avoid this difficulty complex frequencies can be used (\cite{aa,sylv1,sylv2,pratt1,pratt2}). If the imaginary part of the complex frequency is large enough then the result of multiple reflections from the lower layers can be suppressed and the field on the top of the layer can be approximated by an expression that includes the electric characteristics of the adjacent layer only. Thus, complex frequencies simulate a phenomenon of wave propagation in a medium with a strong energy loss when signals from the lower layers are relatively week. 

It is usually assumed in GPR systems that the electric characteristics of the background medium and, therefore, the propagation speed are a priori known. Then, the measurement of the time interval between transmitted and reflected signals allows one to evaluate the distance to the scatter. The goal of this work is to analyze a possibility of recovering both the electric characteristics of the background medium and the distances to the scatter without a priori information about the medium. There are many important applications when the medium is assumed to be planar-layered (\cite{bll1,bll2}).  Hence, it is assumed in this paper that the medium under the surface is  planar-layered with electric characteristics depending on the depth and that  the receiver measures the horizontal components of the electric and magnetic fields. These measurements contain the reflections from all planar interfaces. The goal of this work is to determine the electric properties and hence the type of the medium in each layer along with the thicknesses of the layers. 

For a piece-wise constant medium a new method based on analytical continuation of GPR data in the complex frequency domain is proposed in and tested in \cite{aa}.  In the framework of this method, the electric fields in the complex frequency domain are found using analytic continuation of the data initially given for real frequencies. Then, the impact of the first layer is separated and its electric characteristics and the wave propagation speed are determined. Once these characteristics are found the thickness of the first layer is approximated. Then, the direct problem for the first layer using the data on its top and already found characteristics of that layer is solved. After electric field and its normal derivative at the bottom of the first layer are found the same procedure is repeated for the next layers. Thus, for any given layer one first solves the inverse and then the direct problem. 

However, the assumption that the medium is piece-wise constant is too restrictive for practical applications. To this end, in this paper, the accuracy of the method is analyzed under a more realistic assumption that the permittivity and conductivity of the medium can vary inside each layer.  Thus, unlike \cite{aa,bugar}, electric characteristics of each layers are assumed to be piece-wise smooth functions that are approximated by piece-wise constant functions such that the change of the electric characteristics inside each layer is relatively small.  Since the problem is significantly ill-posed, the method allows approximation of the electrical characteristics with an accuracy that depends on such properties of the medium as the thickness of the layers, conductivity of the medium, and on variation of the permittivity inside the layers. Higher conductivity increases the energy decay and makes results less accurate. Also, very thin layers are impossible to detect, it is well known that to produce an observable reflection the thickness of the layer should be at least of the order of the wavelength. 

The efficiency of numerical methods for solution of GPR problems is usually justified by numerical experiments, but how it depends on the medium itself is not analyzed. However, due to the ill-posedness of the problem, the accuracy of a numerical solution strongly depends on the properties of the medium. Therefore, in this paper, for the proposed numerical method, the dependence of the accuracy of reconstruction of electric characteristics of a medium on the properties of the medium is analyzed both theoretically and numerically. The novelty of this paper is in the main result (Theorem \ref{thm1}) that contains estimates establishing dependence of the accuracy of the numerical solution on the properties of the medium  in the case of a piece-wise smooth layered medium. 

\section{Ground penetrating radars problem.}

Let $z$-axes be perpendicular to the surface $z=0$ and directed inside the medium. The assumption that the medium is planar-layered implies that the permittivity and magnetic permeability of medium are functions of $z$ only. Thus,
the permittivity is $\ep_0\ep(z)$ and magnetic permeability is $\mu_0\mu(z)$, where $\ep_0=8.854\cdot10^{-12}f/m$ and     
$\mu_0=1.257\cdot10^{-6}h/m$ are the permittivity and magnetic permeability in vacuum. For $z<0$ (above the surface) $\ep=1$, $\sigma=0$, and $\mu=1$.
Consider Maxwell's equations:
\begin{equation}\label{mxvec}
\nabla \times{\bf E}=-\mu\mu_0\f{\partial{\bf H}}{\partial t},\q
\nabla \times{\bf H}=\ep\ep_0\f{\partial{\bf E}}{\partial t}+\sigma{\bf E}+
{\bf j},
\end{equation}
where $\sigma$ is conductivity and ${\bf j}$
is an external source. 
\\  

\noindent{\bf GPR Problem. }~ For electric and magnetic fields ${\bf E}$ and
${\bf H}$ given on the surface $z=0$ find $\ep(z)$ and $\sigma(z)$ for
$0<z<L$,  where $L>0$ is the depth on which radar's radiation penetrates. \\ 

\noindent {\bf Remark.} Measurements of the magnetic field allow one to calculate the derivative of the electric field in the direction normal to the surface. Hence, in what follows, we assume that instead of the magnetic field that derivative is given on the surface.

Assume that ${\bf j}=f(t)\delta(x)\delta(z-z_0){\bf e}_y$, where ${\bf
  e}_y $ is a unit vector in direction of $y$-axes. Then the electric
field ${\bf E}$ does not depend on $y$ and is $y$-polarized. Denoting
$y$-component of {\bf E} by $E$  and replacing $\mu_0\ep_0$ by $c^{-2}$, where $c$ is the speed of light in vacuum, one gets the following equation: 
\begin{equation}\label{mxsc}
\f{\mu\ep(z)}{c^2}\f{\partial^2 E}{\partial t^2}-\f{\partial^2 E}{\partial
  x^2}-\f{\partial^2 E}{\partial z^2}+\mu_0\mu\sigma(z)\f{\partial
  E}{\partial t}+\mu_0\mu \f{\partial j}{\partial t}=0, 
\end{equation}
where 
$$
j=f(t)\d(x)\d(z-z_0),\q z_0<0.
$$
Denote by $T$ the transmission time, then $\t{supp}(f)=[0,T]$ and $f(0)=0$.

Applying Fourier transform
$$
\E(z) = \E(z,k_x,\om) =\f{1}{2\pi}\int\int e^{-i(\om t+k_x x)}E(z,x,t)dtdx
$$
one obtains:
\begin{equation}\label{eq1}
\f{d^2\E}{dz^2}+k_z^2(z)\E=h(\om)\d(z-z_0),
\end{equation}
where
\begin{equation}\label{fkz0}
k_z^2(z)=\f{\mu\ep(z)\om^2}{c^2}-i\mu_0\mu\om\s-k_x^2,\q h(\om)=i\mu_0\mu\om\tilde{f}(\om),\q 
\tilde{f}(\om)=\f{1}{\sqrt{2\pi}}\int e^{-i\om t}f(t)dt.
\end{equation}
Since $f=0$ for $t\le 0$ the electric field $u$ is zero for $t<0$. Hence, there exists analytic continuation of $\E$ and $h$ to the lower half of the complex plane $\om$. Let
\begin{equation}\label{2.6}%
\om_1:=\Re\om>0, \q \om_2:=\Im\om\le 0, \q  k_x= 0. 
\end{equation}
 Let
\begin{equation}\label{fkz}%
k_z(z)=-\sqrt{\mu\ep(z)c^{-2}\om^2-i\mu_0\mu\s\om}
\end{equation}
be the negative root if $k_z$ is real, and the root with positive
imaginary part if $k_z$ is complex. For $z<0$, $\ep(z)=1$ and $\sigma(z)=0 $, therefore
$$
k_z(z)=k^0=-\f{\om}{c},\q \hbox{for}\q z<0. 
$$
It is assumed that $\E$ satisfies the radiation conditions:
\begin{equation}\label{bc}
\lim_{z\to - \infty}\f{d\E(z)}{dz} + ik_z(z)\E(z)=0,\q \lim_{z\to \infty}\f{d\E(z)}{dz} - ik_z(z)\E(z)=0.
\end{equation}
Then it follows from (\ref{bc}) that 
\begin{eqnarray}\label{asym}
&&\E=C_1e^{-izk^0} \t{ for }-\infty<z<z_0\t{ and}\nonumber\\
&&\E=C_3e^{izk^0}+C_4e^{-izk^0} \t{ for } z_0<z<0. 
\end{eqnarray}
From equation (\ref{eq1}) one gets
$$
\E(z_0+0)=\E(z_0-0),\q \f{d\E}{dz}(z_0+0)=\f{d\E}{dz}(z_0-0)+h(\om), 
$$
and so
$$
C_3e^{iz_0k^0}+C_4e^{-iz_0k^0}=C_1e^{-iz_0k^0},
$$
$$
C_3e^{iz_0k^0}-C_4e^{-iz_0k^0}=-C_1e^{-iz_0k^0}-i h(\om)/k^0.
$$
Thus
$$
C_3=-\f{i h(\om)}{2k^0}e^{-iz_0k^0},\q C_4=C_1+\f{i h(\om)}{2k^0}e^{iz_0k^0}
$$
and for $z_0<z<0$ 
$$
\E=C_1e^{-izk^0}+\f{i h(\om)}{2k^0}e^{-i(z-z_0)k^0}-\f{i h(\om)}{2k^0}e^{i(z-z_0)k^0}.
$$
Thus,
$$
\f{d\E}{dz}(0)+ik^0\E(0)=h(\om)e^{-iz_0k^0}.
$$
Assume that $\ep$ and $\sigma$ are constant for $z>L$. Then one gets
$$
\E=C_2e^{izk_z(L)} \t{ for }L<z<\infty
$$
and 
$$
\f{d\E}{dz}(L)-ik_z(L)\E(L)=0.
$$
Thus $\E(z,\om)$ is the solution to the following problem:
\begin{equation}\label{stlv}\Biggl\{\ba{lcc}
\f{d^2\E}{dz^2}+k_z^2(z)\E=0,\\
(\f{d\E}{dz}+ik^0\E)_{z=0}= h(\om)e^{-ik^0 z_0},\q 
(\f{d\E}{dz}-ik_z(L)\E)_{z=L}=0.
\ea\Biggl.
\end{equation}

\section{GPR problem for a piece-wise smooth layered medium.}

The following lemma has been proved in \cite{aa}.

\begin{lemma}\label{lemma31} If
 \begin{equation}\label{12.6}
\om_1:=\Re\om>0, \q \om_2:=\Im\om< 0,
\end{equation}
then
$$
\E(z,\om)\neq 0,\q \Im(\E_z(z,\om)/\E(z,\om))\le 0\q \hbox{for}\q z\in [0,L].
$$
\label{l1}\end{lemma}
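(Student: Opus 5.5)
The plan is to study the Riccati-type quantity $w(z)=\E_z(z,\om)/\E(z,\om)$ and propagate the sign of $\Im w$ upward from $z=L$ to $z=0$ through an energy (Green-type) identity. Both claims of the lemma, $\E\neq0$ and $\Im w\le 0$, then follow together.

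The first step is an energy identity. Multiply the equation in (\ref{stlv}) by $\bar\E$, integrate over $[z,L]$, and integrate by parts:
$$
\E_z(L)\bar\E(L)-\E_z(z)\bar\E(z)-\int_z^L|\E_z|^2\,ds+\int_z^L k_z^2|\E|^2\,ds=0 .
$$
Using the boundary condition at $L$, the first term equals $ik_z(L)|\E(L)|^2$. Taking imaginary parts gives
$$
\Im\bigl(\E_z(z)\bar\E(z)\bigr)=\Re k_z(L)\,|\E(L)|^2+\int_z^L \Im(k_z^2)|\E|^2\,ds .
$$

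The second step is to check the signs, using $\om_1>0$ and $\om_2<0$:

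\begin{itemize}
\item From (\ref{fkz0}), $\Im k_z^2=\mu\ep c^{-2}\,2\om_1\om_2-\mu_0\mu\s\om_1<0$, since $\ep>0$ and $\s\ge 0$.
\item The chosen root satisfies $\Im k_z\ge 0$. Because $\Im k_z^2=2\Re k_z\,\Im k_z<0$, this forces $\Im k_z>0$ and $\Re k_z<0$, consistent with the prescription in (\ref{fkz}).
\end{itemize}

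Hence both terms on the right-hand side are $\le 0$, so $\Im(\E_z\bar\E)(z)\le 0$ for every $z\in[0,L]$. Wherever $\E(z)\ne0$, this gives $\Im w=\Im(\E_z\bar\E)/|\E|^2\le0$.

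The third step is non-vanishing, which I expect to be the main point. Suppose $\E(z_*)=0$ for some $z_*\in[0,L]$. Then the left side vanishes at $z_*$, so $\Re k_z(L)|\E(L)|^2=0$ and $\int_{z_*}^L\Im(k_z^2)|\E|^2=0$. Since $\Re k_z(L)<0$ strictly, this gives $\E(L)=0$. The boundary condition at $L$ then gives $\E_z(L)=0$, and uniqueness for the ODE, which holds for piecewise-continuous coefficients with $\E,\E_z$ continuous across interfaces, yields $\E\equiv0$ on $[0,L]$. That contradicts the nonzero boundary datum at $z=0$ whenever $h(\om)\ne0$; if $h(\om)=0$ the field is trivial and the claim is understood for $h\ne0$.

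One subtlety should be noted: the above argument needs $\Re k_z(L)<0$ strictly. This holds because $\Im k_z^2(L)<0$ strictly, which is true as soon as $\ep(L)>0$ and $\om_1>0$, $\om_2<0$; this is where the strict inequality $\om_2<0$ of (\ref{12.6}) is used.
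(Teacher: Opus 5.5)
Your proof is correct and complete. The paper gives no argument of its own here. The lemma is quoted from \cite{aa}, which (per the introduction) concerns piecewise-constant media, while here it is applied to coefficients that vary inside layers and jump across interfaces. Your Green-identity proof therefore serves as a self-contained justification in the present setting, since it uses only $\ep>0$, bounded $\s\ge0$, and continuity of $\E,\E_z$ across interfaces. It also has a structural advantage over working with the Riccati equation (\ref{imeq}), the paper's tool in the proof of Theorem~\ref{thm1}. That route can control $\Im q$ only once $\E\neq0$ is known, since $q$ is otherwise undefined. Your identity gives non-vanishing and the sign of $\Im(\E_z\overline{\E})$ together.

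Two comments on your side remarks. First, the hypothesis $h(\om)\neq0$ you add is genuinely missing from the statement, not a formality. The function $h(\om)=i\mu_0\mu\om\tilde{f}(\om)$ is entire and can vanish in the open lower half-plane; e.g.\ $f(t)=g(t)-a\,g(t-\tau)$ with $a>1$ has zeros with $\om_2=-\ln a/\tau$. At such $\om$ the boundary condition at $z=0$ gives $\Im(\E_z\overline{\E})(0)=\f{\om_1}{c}|\E(0)|^2\ge0$, and together with your identity this forces $\E\equiv0$. Second, the strict inequality $\om_2<0$ is not really needed where you say it is used. The quantity $k_z^2(L)$ can be real only when $\om_2=0$ and $\s(L)=0$; then it is positive and (\ref{fkz}) selects the negative root, so $\Re k_z(L)<0$ still holds. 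Your argument therefore goes through for all $\om_1>0$, $\om_2\le0$.
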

Since $\E(z,\om)\neq 0$ one can introduce a function
\begin{equation}\label{vz}
q(z,\om):=\f{\E_z(z,\om)}{\E(z,\om)}.
\end{equation}
From (\ref{stlv}) one gets the following problem for $q(z):=q(z,\om)$:
\begin{equation}\label{imeq}
q'(z)+q^2(z)+k_z^2(z)=0,\q q(L)=ik_z(L).
\end{equation}

\noindent{\bf Condition A.} There exists a positive constant $C$ such that
\be\label{epssig}
\f{\s(z)}{\ep(z)}\le\f{2C\om_1}{\mu_0c^2}\q\hbox{for}\q z\in [0,L],
\ee
\be\label{condA}
-(1+\phi(C))\om_1\le\om_2\le(1-\sqrt{2})\om_1,
\ee
where $\phi(C)=\sqrt{2+C^2}-C$.

\begin{lemma} Let Condition {\bf A} be satisfied, then 
\be\label{l11}
\arg(k_z^2)\in \left[-\f{3\pi}{4},-\f{\pi}{4}\right],\q \arg(k_z)\in \left[\f{5\pi}{8},\f{7\pi}{8}\right],
\ee
\be\label{l12}
\Re\{k_z\}<0,\q \Im\{k_z\}\ge\sqrt{\f{-2\mu\ep c^{-2}\om_1\om_2+\mu\mu_0\sigma\om_1}{2(\sqrt{2}+1)}}.
\ee
\label{argkz}\end{lemma}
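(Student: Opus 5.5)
The plan is to compute $k_z^2$ explicitly, locate its argument using Condition {\bf A}, take the square root with the branch fixed in (\ref{fkz}), and finally get the lower bound on $\Im\{k_z\}$ from elementary trigonometry.

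\emph{Real and imaginary parts.} With $\om=\om_1+i\om_2$ and $\om^2=\om_1^2-\om_2^2+2i\om_1\om_2$, (\ref{fkz0}) gives
$$\Re k_z^2=\mu\ep c^{-2}(\om_1^2-\om_2^2)+\mu_0\mu\s\om_2,\qquad \Im k_z^2=2\mu\ep c^{-2}\om_1\om_2-\mu_0\mu\s\om_1 .$$
Since $\om_2<0$ and $\s\ge0$, we have $\Im k_z^2<0$. The first claim of (\ref{l11}) is then equivalent to $|\Re k_z^2|\le -\Im k_z^2$. Divide by $\mu\ep c^{-2}$ and put $s=\mu_0c^2\s/\ep$, so that $0\le s\le 2C\om_1$ by (\ref{epssig}). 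Also write $t=\om_2/\om_1$.

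\emph{The two one-sided inequalities.} The inequality $\Re k_z^2\le-\Im k_z^2$ becomes $\om_1^2(1+2t-t^2)\le s\,\om_1(1-t)$. Its right-hand side is nonnegative, and its left-hand side is nonpositive because $t\le1-\sqrt2$; this is the upper bound in (\ref{condA}).

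The inequality $-\Re k_z^2\le -\Im k_z^2$ becomes $\om_1^2(t^2+2t-1)\le s\,\om_1(1+t)$. I would split into two cases.
\begin{itemize}
\item If $t\ge-1$, the right-hand side is $\ge0$. The left-hand side is $\le0$ because $t\ge-(1+\phi(C))\ge-1-\sqrt2$, using $\phi(C)\le\sqrt2$ for $C\ge0$.
\item If $t<-1$, the worst case is $s=2C\om_1$. We then need $t^2+(2-2C)t-1-2C\le0$. The roots of this quadratic are $C-1\pm\sqrt{C^2+2}$, and the smaller root is exactly $-(1+\phi(C))$. So the lower bound in (\ref{condA}) is precisely what is required.
\end{itemize}
This sign bookkeeping, and noticing that $\phi(C)$ comes from that root, is the only delicate point.

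\emph{Square root, and the bound on $\Im\{k_z\}$.} Let $\theta=\arg(k_z^2)\in[-3\pi/4,-\pi/4]$. The square roots of $k_z^2$ have arguments in $[-3\pi/8,-\pi/8]$ or in $[5\pi/8,7\pi/8]$. Since $\Im k_z^2\neq 0$, $k_z$ is not real, so by (\ref{fkz}) the root with positive imaginary part is the one taken; this gives the second claim of (\ref{l11}) and hence $\Re\{k_z\}<0$.

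For the lower bound, the half-angle formula gives
$$(\Im k_z)^2=|k_z^2|\,\f{1-\cos\theta}{2},\qquad |\Im k_z^2|=|k_z^2||\sin\theta| .$$
Therefore
$$\f{(\Im k_z)^2}{|\Im k_z^2|}=\f12\tan\f{|\theta|}{2}\ge\f12\tan\f{\pi}{8}=\f{\sqrt2-1}{2}=\f{1}{2(\sqrt2+1)} .$$
Substituting $|\Im k_z^2|=-2\mu\ep c^{-2}\om_1\om_2+\mu\mu_0\s\om_1$ and taking square roots yields (\ref{l12}).
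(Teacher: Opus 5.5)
Your proposal is correct and takes the same route as the paper. The paper also splits $k_z^2$ into real and imaginary parts, shows $|\Re k_z^2|\le -\Im k_z^2$ from Condition A, fixes the branch, and bounds $\Im k_z$. For the middle step the paper writes the condition as $-1\le(\lambda^2+\alpha\lambda-1)/(2\lambda+\alpha)\le 1$, with $\lambda=-t$ and $\alpha=s/\om_1$, and simply asserts it; your case split fills this in and correctly identifies $-(1+\phi(C))$ as the root that makes the lower bound in Condition A exactly sufficient. Your half-angle identity $(\Im k_z)^2/|\Im k_z^2|=\tfrac12\tan(|\theta|/2)$ is a trigonometric form of the paper's $\Im k_z=-\Im(k_z^2)\bigl(2(|k_z^2|+\Re k_z^2)\bigr)^{-1/2}$ combined with $|\Re k_z^2|\le|\Im k_z^2|$, and it gives the same constant.
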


Let $\ep(z)$ and $\s(z)$ be smooth functions on semi-closed intervals $I_j=[z_{j-1}, z_j)$ and on $I_{N+1}=[z_N,\infty)$ with finite jumps at $z_{j}$, $j=1\dots N$, where $0=z_0<z_1<\dots<z_N$, and $\ep(z)=\ep_j(z)$, $\s(z)=\s_j(z)$ on $I_j$, $\ep(z)=\ep_{N+1}=const$, $\s(z)=0$ on $I_{N+1}$. Then, the continuous function $q(z)$ satisfies the following equations: 
$$
q'(z)+q^2(z)+k_j^2(z)=0\q\hbox{ on }I_j,\q j=1,\dots,N,
$$
$$
q(z)=ik_{N+1}=-\f{\om}{c}\sqrt{\mu\ep_{N+1}}\q\hbox{ on }I_{N+1},
$$
where
$$
k_j(z)=-\sqrt{\mu\ep_j(z)c^{-2}\om^2-i\mu_0\mu\s_j(z)\om}.
$$
Denote $w_j=\f{u_j-1}{u_i+1}$, where $u_j(z)=-iq(z)/k_j(z)$, $z\in I_j$, $j=1,2,\dots,N+1$, then in $I_j$
\be\label{pweq1}
ik_ju'_j+ik'_ju_j-k_j^2u_j^2+k_j^2=0
\ee
and
\be\label{pweq2}
w'_j+2ik_jw_j-\f{k'_j}{2k_j}w_j^2+\f{k'_j}{2k_j}=0.
\ee
Since $q(z)$ is a continuous function,
\be\label{contcond}
\lim\limits_{z\to z_{j}-0}k_{j}(z)u_{j}(z)=k_{j+1}(z_{j})u_{j+1}(z_{j}), \q j=1,2,\dots,N.
\ee
Let
\be\label{c0}
c_0:=\tan\left(\f{3\pi}{8}\right)+\sec\left(\f{3\pi}{8}\right)=1+\sqrt{2}+\sqrt{4+2\sqrt{2}}\approx 5.
\ee

\noindent{\bf Condition B.}  For $\d\in (0,\sqrt{2}-1]$, $\om=\om_1+i\om_2$, 
and $j=1,2,\dots,N$, there exist constants $C_j$ and $\l_j$ such that for $z\in I_j$
\be\label{l421}
\f{|\ep'_j(z)|}{\ep_j(z)}\le 4\lambda_j\f{|\om_1\om_2|}{|\om|^2},\q
\f{|\s'_j(z)|}{\s_j(z)}\le 2\lambda_j\frac{\om_1}{|\om|},
\ee
\be\label{l422}
4 c^{-1}\sqrt{\f{\mu\ep_j(z)\om_1|\om_2|}{\sqrt{2}+1}}-c_0\l_j\ge C_j>0,
\ee
$$
e^{-C_j(z_j-z_{j-1})}+\f{\l_j}{C_j}\left(1+\f{\l_j}{C_j}\right)\le \d^2.
$$

\begin{theorem}  Let Conditions {\bf A} and {\bf B} be satisfied,
then for $j=0,1,\dots,N$,
\be\label{mest}
|w_{j+1}(z_{j})|\le \delta,\q \max\limits_{z\in I_{j+1}}|w_{j+1}(z)|\le 1+\f{\l_{j+1}}{C_{j+1}},\q \lim\limits_{z\to z_j-0}|w_j(z)|\le 1,
\ee
\be\label{t1est}
k_j(z_{j-1})=-iq(z_{j-1})\left(1-\kappa_j\right),\q\hbox{where}\q|\kappa_j|\le\f{2\d}{1-\d}.
\ee
\label{thm1}\end{theorem}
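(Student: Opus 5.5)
Since the bottom layer $I_{N+1}$ is homogeneous, the plan is a backward induction on $j$ from $N$ down to $0$, driven by the Riccati equation (\ref{pweq2}) for $w_j$. On $I_{N+1}$ we have $q\equiv ik_{N+1}$, so $u_{N+1}\equiv 1$ and $w_{N+1}\equiv 0$. Hence $\lim_{z\to z_N-0}|w_N(z)|$ must be controlled from the interface relation (\ref{contcond}). In general, at each interface $z_j$ we would show $\lim_{z\to z_j-0}|w_j(z)|\le 1$, then propagate (\ref{pweq2}) upward across $I_j$ to get $|w_j(z_{j-1})|\le\d$ and the uniform bound on $I_j$, and then pass to the next interface.

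\textbf{Interface step.} For $z\in I_j$ write $u_j=-iq/k_j$. Lemma \ref{lemma31} gives $\Im q\le 0$, so $-iq$ lies in the closed left half-plane, i.e. $\Re(-iq)\le 0$. By Lemma \ref{argkz}, $\arg k_j\in[5\pi/8,7\pi/8]$. I would check that these two facts force $\Re u_j\ge 0$, since dividing by $k_j$ rotates the left half-plane into one containing the right half-plane's complement boundary. If $\Re u\ge 0$ exactly, then $|w|=|u-1|/|u+1|\le 1$ immediately. However, the angle of $k_j$ is not exactly $\pi$, so $\Re u_j\ge0$ may fail. I would instead prove directly that $|u-1|\le|u+1|$, using the sector bounds from Lemma \ref{argkz} together with the bound on $\arg q$ coming from Lemma \ref{lemma31}. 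This holds at the one-sided limit $z\to z_j-0$ because $q$ is continuous, which gives the third claim in (\ref{mest}). Indeed this bound needs no induction at all: it is pointwise information, so it holds on every $I_j$.

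\textbf{Propagation step (main obstacle).} Integrate (\ref{pweq2}) from $z_j$ back to $z\in I_j$ via the integrating factor $e^{2i\int k_j}$. Since $\Im k_j>0$, the homogeneous part decays going upward like $e^{-2\int\Im k_j}$. Lemma \ref{argkz} gives $\Im k_j\ge\frac12\cdot 4c^{-1}\sqrt{\mu\ep_j\om_1|\om_2|/(\sqrt2+1)}$ up to the constant. Condition B bounds $|k'_j/(2k_j)|$: differentiating $k_j^2$ gives $k_j'/k_j=\frac12(\ep'\ep\om^2/c^2-i\mu_0\s'\om)/(k_j^2)$, and (\ref{l421}) with (\ref{epssig}) should give $|k_j'/(2k_j)|\le c_0\l_j/2$ after using $\arg$ estimates (the constant $c_0$ from $\tan,\sec$ of $3\pi/8$ arises here). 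Then $\rho=|w_j|$ satisfies the differential inequality $-\rho'\le -(2\Im k_j-c_0\l_j\rho\ldots)\rho+\l_j(1+\rho^2)$, organised so that the effective damping rate is $\ge C_j$ by (\ref{l422}). A comparison argument starting from $\rho(z_j-0)\le1$ then yields $\rho(z)\le e^{-C_j(z_j-z)}+\frac{\l_j}{C_j}(1+\frac{\l_j}{C_j})$, with a continuity/bootstrap argument keeping $\rho\le 1+\l_j/C_j$ on the whole of $I_j$. At $z=z_{j-1}$ Condition B gives $\rho\le\d^2\le\d$ (even stronger). Getting the constants in this Gronwall/comparison step to match exactly is the delicate part, and I would first try the scalar Riccati comparison $\rho'=-C\rho+\l(1+\rho)^2$-type majorant.

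\textbf{Final estimate.} From $w=(u-1)/(u+1)$ we get $u=(1+w)/(1-w)$, so $k_j=-iq/u=-iq(1-w)/(1+w)$. Setting $1-\kappa_j=(1-w)/(1+w)$ gives $\kappa_j=2w/(1+w)$, hence $|\kappa_j|\le2\d/(1-\d)$ at $z_{j-1}$, which is (\ref{t1est}).
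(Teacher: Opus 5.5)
There is a genuine gap in your interface step, and the whole induction depends on that step. Lemmas \ref{lemma31} and \ref{argkz} do not imply $\lim_{z\to z_j-0}|w_j(z)|\le 1$. From $\Im q\le 0$ you only get $\arg(-iq)\in[\pi/2,3\pi/2]$. Dividing by $k_j$ with $\arg k_j\in[5\pi/8,7\pi/8]$ then gives only $\arg u_j\in[-3\pi/8,7\pi/8]$, so nothing prevents $\Re u_j<0$ (take $q$ near the positive real axis). Your proposed repair, proving $|u-1|\le|u+1|$ directly, is the same inequality in disguise, since $|u+1|^2-|u-1|^2=4\Re u$. On that sector the best pointwise bound is $|w_j|\le\tan(3\pi/8)+\sec(3\pi/8)=c_0\approx 5$. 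This is exactly the paper's a priori bound (\ref{betac0}), not $1$. As a sanity check: if your pointwise argument worked, it would give $|w_j|\le 1$ on all of $I_j$, and the second estimate in (\ref{mest}) would be pointless.

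The missing idea is that the bound at the bottom of layer $j-1$ is imported from the top of layer $j$. That is why an induction is needed, and why Condition B requires $\delta\le\sqrt{2}-1$. The argument runs as follows:
\begin{itemize}
\item Once $|w_j(z_{j-1})|\le\delta\le\sqrt{2}-1$ is known, the map $w\mapsto(1+w)/(1-w)$ sends this disk into the sector $|\arg u|\le\pi/4$, so $|\arg u_j(z_{j-1})|\le\pi/4$.
\item Continuity of $q$, i.e. (\ref{contcond}), gives $u_{j-1}(z_{j-1}-0)=u_j(z_{j-1})\,k_j(z_{j-1})/k_{j-1}(z_{j-1}-0)$.
\item The quotient of the two wave numbers has argument in $[-\pi/4,\pi/4]$ by Lemma \ref{argkz}.
\item Hence $\Re u_{j-1}(z_{j-1}-0)\ge 0$, so $|w_{j-1}(z_{j-1}-0)|\le 1$.
\end{itemize}
The base case is the same computation with $u_{N+1}\equiv 1$.

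There are also two smaller problems in your propagation step. First, $c_0$ does not come from estimating $k_j'/k_j$. Condition (\ref{l421}) gives $|(k_j^2)'|\le 2\lambda_j(-\Im k_j^2)\le 2\lambda_j|k_j|^2$, so $|k_j'/k_j|\le\lambda_j$ directly. Instead, $c_0$ enters (\ref{l422}) through the a priori bound $|w_j|\le c_0$, which is used to linearize the quadratic term $\Re(w_j k_j'/k_j)|w_j|^2$ in the equation for $|w_j|^2$. Your bootstrap with $|w_j|\le 1+\lambda_j/C_j<c_0$ could replace that bound, but it still needs the starting value $\le 1$ from the interface. Second, the Gronwall estimate is naturally for $|w_j|^2$, with rate $4\Im k_j$. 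Condition B therefore yields $|w_j(z_{j-1})|^2\le\delta^2$, not $|w_j(z_{j-1})|\le\delta^2$.
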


\section{Proofs of Lemma \ref{argkz} and Theorem \ref{thm1}.}

{\bf Proof of Lemma \ref{argkz}.}
From (\ref{fkz})
\begin{equation}\label{z1}
\begin{array}{l}
k_z^2=\f{\mu\ep(z)}{c^2}(\om_1^2-\om_2^2)+\mu_0\mu\s\om_2+i(
\f{2\mu\ep}{c^2}\om_1\om_2-\mu_0\mu\s\om_1),
\end{array}
\end{equation}
and, therefore,
$$
\f{\Re(k_z^2)}{\Im(k_z^2)}=\f{\mu\ep c^{-2}(\om_1^2-\om_2^2)+\mu\mu_0\sigma\om_2}{2\mu\ep c^{-2}\om_1\om_2-\mu\mu_0\sigma\om_1}=\f{\l^2+\a\l-1}{2\l+\a},
$$
where $\l=-\f{\om_2}{\om_1}>0,\q \a=\f{\mu_0c^2\sigma}{\ep\om_1}\ge 0$.
Inequalities (\ref{epssig}) and (\ref{condA}) imply that $\sqrt{2}-1\le\l\le 1+\phi(C)$, $0\le\a\le 2C$ and, hence,
$
-1\le \f{\l^2+\a\l-1}{2\l+\a}\le 1.
$
Thus, 
\be\label{l102}
-1\le \f{\Re(k_z^2)}{\Im(k_z^2)}\le 1.
\ee
Since $\Im\{k_z^2\}<0$, this inequality implies that $\arg(k_z^2)\in \left[-\f{3\pi}{4},-\f{\pi}{4}\right]$. Hence, $\arg(k_z)\in \left[\f{5\pi}{8},\f{7\pi}{8}\right]$, since $\Im\{k_z\}>0$, and, therefore, $\Re\{k_z\}<0$.
Finally, using estimate (\ref{l102}) again, one gets
$$
\Im(k_z)=-\Im(k_z^2)(2(|k_z^2|+\Re(k_z^2)))^{-1/2}\ge\sqrt{\frac{-\Im(k_z^2)}{2(\sqrt{2}+1)}}.
$$

\noindent {\bf Proof of Theorem \ref{thm1}.} To prove estimates (\ref{mest}) the method of mathematical induction is used. First, we verify estimates (\ref{mest}) for $j=N$. Since $u_{N+1}=1$  and $w_{N+1}=0$ in $I_{N+1}$
 the first two inequalities hold.  To verify the third inequality we use condition (\ref{contcond}) and inequalities $\Re(k_z)<0$ and $\Im(k_z)\ge 0$ established in Lemma \ref{argkz}. Thus,
$$
\lim\limits_{z\to z_N-0}\Re\{u_N(z)\}=\lim\limits_{z\to z_N-0}\Re\left\{\f{k_{N+1}(z_N)}{k_N(z)}\right\}\ge 0,
$$
and, therefore,
$$
\lim\limits_{z\to z_N-0}|w_N(z)|^2 = \lim\limits_{z\to z_N-0}\f{|u_N(z)|^2-2\Re(u_N(z))+1}{|u_N(z)|^2+2\Re(u_N(z))+1}\le 1.
$$
Now we assume estimates (\ref{mest}) hold for $j=m+1$ and prove them for $j=m$.
Lemma \ref{lemma31} implies $\Re(k_ju)=\Im(q)\le 0$. Therefore,
$$
\Re\left(k_j\f{1+w_j}{1-w_j}\right)\le 0
$$
and, hence,
$$
\Re(k_j)(1-|w_j|^2)-2\Im(k_j)\Im(w_j)\le 0.
$$
Since $\Re(k_j)<0$, this inequality implies
$$
|w_j|^2\le 1+2\left|\f{\Im(k_j)}{\Re(k_j)}\right|\cdot |w_j|.
$$
After solving this inequality with respect to $|w_j|$, one gets
\be\label{estw1}
|w_j|\le \left|\f{\Im(k_j)}{\Re(k_j)}\right|+\sqrt{\left(\f{\Im(k_j)}{\Re(k_j)}\right)^2+1}.
\ee
According to  Lemma {\ref{argkz}}, $\arg(k_z)\in \left[\f{5\pi}{8},\f{7\pi}{8}\right]$. Therefore,
$$
\left|\f{\Im(k_j)}{\Re(k_j)}\right|\le \tan\left(\f{3\pi}{8}  \right).
$$
Denote,
$$
\b_j:=\max\limits_{z_{j-1}<z<z_j}|w_j(z)|,
$$
then (\ref{estw1}) and (\ref{c0}) imply the initial estimate for $\b_j$:
\be\label{betac0}
\b_j\le c_0.
\ee
  In $I_m$ equation (\ref{pweq2}) implies
$$
\frac{d}{dz}|w_m|^2-4\Im(k_m)|w_m|^2-\Re\left(\f{k'_m}{k_m}w_m\right)|w_m|^2+\Re\left(\f{k'_m}{k_m}\overline{w_m}\right)=0.
$$
Applying to this inequality the estimate  $|w_m(z)|\le \b_m$ yields
\be\label{IntIneq}
\frac{d}{dz}|w_m|^2-(4\Im(k_m)-\beta_m|k'_m/k_m|)|w_m|^2\ge -\beta_m|k'_m/k_m|.
\ee
Denote
$$
\g_m(z):=4\Im(k_m)-\beta_m|k'_m/k_m|.
$$
Since $\om_1>0$, $\om_2\le0$, 
$$
2|k_z'k_z|=\left|\f{d}{dz}k_z^2\right|=\left|\frac{\mu\ep'\om^2}{c^2}-i\mu_0\mu\om\s'\right|\le \frac{\mu|\om|^2}{c^2}|\ep'|+\mu_0\mu|\om\s'|
$$
$$
\le 2\l_m\left(-\frac{2\mu\ep}{c^2}\om_1\om_2+\mu_0\mu\om_1\s\right)
=2\l_m(-\Im(k_z^2))\le 2\l_m|k_z|^2,
$$
and, hence, $\left|\f{k'_z(z)}{k_z(z)}\right|\le \l_m$ in $I_m$.
Therefore, inequalities (\ref{l12}), (\ref{l422}), and (\ref{betac0}) imply that
\be\label{estGamma}
\g_m(z)=4\Im(k_m)-\beta_m|k'_m/k_m| \ge 4 c^{-1}\sqrt{\f{\mu\ep_m(z)\om_1|\om_2|}{\sqrt{2}+1}} -c_0\l_m\ge C_m> 0.
\ee
The multiplication of (\ref{IntIneq}) by $\exp\left\{\int\limits_z^{z_m}\g_m(s)ds\right\}$ yields
$$
\frac{d}{dz}\left(|w_m(z)|^2\exp\left\{\int\limits_z^{z_m}\g_m(z)ds\right\}\right)
\ge -\beta_m|k'_m/k_m|\exp\left\{\int\limits_z^{z_m}\g_m(s)ds\right\},
$$
and, after integration with respect to $z$ one gets the following estimate
$$
|w_m(z)|^2\le \lim\limits_{\zeta\to z_m-0}|w_m(\zeta)|^2\exp\left\{-\int\limits_z^{z_m}\g_m(s)ds\right\}
+\beta_m\int\limits_z^{z_m}\left|\f{k'_m(\tau)}{k_m(\tau)}\right|\exp\left\{-\int\limits_z^\tau\g_m(s)ds\right\}d\tau.
$$
Now we estimate the last term in the inequality above:
$$
\beta_m\int\limits_z^{z_m}\left|\f{k'_m(\tau)}{k_m(\tau)\g_m(\tau)}\right|\g_m(\tau)\exp\left\{-\int\limits_z^\tau\g_m(s)ds\right\}d\tau
$$
$$
\le\b_m \max\limits_{\zeta\in I_m}\left\{\f{|k'_m(\zeta)/k_m(\zeta)|}{\g_m(\zeta)}\right\}\left(1-\exp\left\{-\int\limits_z^{z_m}(\g_m(s)ds\right\}\right)\le \l_m\b_m\max\limits_{\zeta\in I_m}\g_m^{-1}(\zeta).
$$
Using the induction assumption that $ \lim\limits_{\zeta\to z_m}|w_m(\zeta)|\le 1$ and estimate (\ref{estGamma}) we get that
\be\label{west}
|w_m(z)|^2\le\exp\left\{-\int\limits_z^{z_m}\g_m(s)ds\right\}+\f{\b_m\l_m}{C_m}\le 1+\f{\l_m\b_m}{C_m}.
\ee
Hence,
$$
\b_m^2\le 1+\f{\l_m\b_m}{C_m},
$$
and, therefore,
$$
|w_m(z)|\le \b_m\le\f{\l_m}{2C_m}+\sqrt{\f{\l_m^2}{4C_m^2}+1}\le 1+\f{\l_m}{C_m}.
$$
Therefore, estimate (\ref{west}) implies
$$
|w(z_{m-1})|^2\le \exp\left\{-\int\limits_{z_{m-1}}^{z_m}\g_m(s)ds\right\}+\f{\l_m}{C_m}\left(1+\f{\l_m}{C_m}\right) 
$$
$$
\le e^{-C_m(z_m-z_{m-1})}+\f{\l_m}{C_m}\left(1+\f{\l_m}{C_m}\right)\le\d^2.
$$
Thus, for $j=m$ the first two inequalities in (\ref{mest}) hold. In order to prove the third inequality we use condition  (\ref{contcond}):
$$
\lim\limits_{z\to z_{m-1}-0}k_{m-1}(z)u_{m-1}(z)=k_{m}(z_{m-1})u_{m}(z_{m-1}).
$$
Denote
$$
k_{m-1}^*:=\lim\limits_{z\to z_{m-1}-0}k_{m-1}(z),
$$
then
$$
\lim\limits_{z\to z_{m-1}-0}u_{m-1}(z)=\f{k_{m}(z_{m-1})u_{m}(z_{m-1})}{k^*_{m-1}}.
$$
After taking the real parts in the formula above we get
$$
\lim\limits_{z\to z_{m-1}-0}\Re(u_{m-1}(z))
$$
$$
=|k^*_{m-1}|^{-2}[\Re(k_{m}(z_{m-1})\overline{k^*_{m-1}})\Re(u_{m}(z_{m-1})-\Im(k_{m}(z_{m-1})\overline{k^*_{m-1}})\Im(u_{m}(z_{m-1})],
$$
where, according to Lemma \ref{argkz}, $\arg(k_{m}(z_{m-1})\overline{k^*_{m-1}})\in [-\pi/4,\pi/4]$ and, hence,
$$
\Re(k_{m}(z_{m-1})\overline{k^*_{m-1}})\ge 0\hbox{ and } \Re(k_{m}(z_{m-1})\overline{k^*_{m-1}})\ge \Im(k_{m}(z_{m-1})\overline{k^*_{m-1}}).
$$
Further, $|w_m(z_{m-1})|\le \d\le\sqrt{2}-1$ and
$$
u_m(z_{m-1})=\f{1+w_m(z_{m-1})}{1-w_m(z_{m-1})}=\f{1-|w_m(z_{m-1})|^2+2i\Im(w_m(z_{m-1}))}{|1-w_m(z_{m-1})|^2},
$$
imply
$$
\Re(u_m(z_{m-1}))\ge 0\hbox{ and }\Re(u_m(z_{m-1}))\ge\Im(u_m(z_{m-1})).
$$
Therefore,
$$
\lim\limits_{z\to z_{m-1}-0}\Re(u_{m-1}(z))\ge 0
$$
and, hence,
$$
\lim\limits_{z\to z_{m-1}-0}|w_{m-1}(z)|^2 
= \lim\limits_{z\to z_{m-1}-0}\f{|u_{m-1}(z)|^2-2\Re(u_{m-1}(z))+1}{|u_{m-1}(z)|^2+2\Re(u_{m-1}(z))+1}\le 1.
$$
Thus, estimates (\ref{mest}) are proven by the mathematical induction. Finally,
$$
k_z(z_{j-1})=\f{q(z_{j-1})}{iu_j(z_{j-1})}=-iq(z_{j-1})\f{1-w_j(z_{j-1})}{1+w_j(z_{j-1})}=-iq(z_{j-1})\left(1-\f{2w_j(z_{j-1})}{1+wj(z_{j-1})}\right)
$$
and
$$
|w_j(z_{j-1})|\le \d
$$
imply
$$
|\kappa_j|=\left|\f{2w(z_{j-1})}{1+w(z_{j-1})}\right|\le\f{2|w(z_{j-1})|}{1-|w(z_{j-1})|}\le\f{2\d}{1-\d}.
$$


\section{Inversion method.}

From (\ref{fkz0})
\be\label{forep}
\ep_j(z_{j-1})=\f{c^2}{\mu|\om|^2}\left(\Re\{k^2_j(z_{j-1}\}+\f{\om_2}{\om_1}\Im\{k^2_j(z_{j-1})\}\right),
\ee
\be\label{fors}
\s_j(z_{j-1})=\f{\om_2}{\mu_0\mu|\om|^2}\left(2\Re\{k^2_j(z_{j-1}\}+\left(\f{\om_1}{\om_2}-\f{\om_2}{\om_1}\right)\Im\{k^2_j(z_{j-1})\}\right).
\ee
Theorem \ref{thm1} yields that if conditions {\bf A} and {\bf B}  are satisfied, then an appropriate choice of $\omega_1$
and $\omega_2$  makes the error term $\kappa$ small and, hence, one can find an approximate value of $k_z$ at the top of the layer:
\be\label{kzq}
k^2_j(z_{j-1},\om)\approx -q^2(z_{j-1},\om) =-\left(\f{\E_z(z_{j-1},\om)}{\E(z_{j-1},\om)}\right)^2.
\ee
 Therefore, one can
determine $\ep$, $\s$, and the  and wave propagation speed $c/\sqrt{\ep}$ for that
layer. Note, that there is an assumption that the $|\ep'_j(z)|$ is small and, therefore, the variation of $\ep(z)$ inside the layer is small as well.
 Measuring the time interval between impulses corresponding to
incident and reflected waves one find the thickness of the
layer. Now, when $\ep$, $\s$, and the thickness of the given layer are known, one can choose a real frequency $\om=\om_1$, 
solve the direct problem, and evaluate $\E$ and $d\E/dz$ at the bottom of the layer that is the top of the next layer. 
Then, the same procedure can be repeated for the next layer.  

An analytic continuation of
functions $\E(z_{j-1},\om)$ and $\E_z(z_{j-1},\om)$   to the complex frequency domain 
$\om=\om_1+i\om_2$ is the most essential step for numerical implementation of this scheme.
Such analytic continuation is an ill-posed procedure and should be done very carefully. In our numerical experiments fast Fourier-Laplace transform was used.  For the next layer (say for the layer number $j$) $E=0$ and $dE/dz=0$ for $t<t_j$,
where $t_j$ is the time of the first arrival of the wave to the top of
the layer. In order to improve the accuracy of numerical analytic continuation
a rescaling $t\to t-t_j$ is used for every layer.  A regularized stable analytic continuation from the real axis is proposed in \cite{fffq}.

The choice of the imaginary part $\om_2$ of the frequency $\om$ is crucial. In order to satisfy inequality (\ref{condA}) in Condition A , $\om_2$ can be chosen in the interval $[-\om_1,(1-\sqrt{2})\om_1]$. The smaller values of $\om_2$ (the values with larger absolute values) such as $\om_2=-\om_1$ decrease the error term $\kappa$ and, hence, increase the accuracy of approximation in (\ref{kzq}), but, at the same time, they make the numerical analytic continuation less stable and increase the corresponding error. In our numerical experiments for an analytic continuation the fast Fourier transform was used and $\om_2=-0.9\om_1$ was chosen. However, a more efficient analytic continuation procedure would allow to achieve higher accuracy by choosing $\om_2=-\om_1$.

\section{ Numerical algorithm.} Below we assume that $\mu$ is known. Usually one can assume that for a non-magnetic medium $\mu=1$. Let $N$ be the number of layers.  Proposed numerical
algorithm includes the following steps repeated for every $j$ from $1$
to $N$. 
\begin{itemize}

\item I step: using the data on the top of the layer $[z_{j-1},z_{j}]$
  (for $j=1$ these data are obtained from the measurements on the
  surface, and for $j> 1$  from calculations done for
  the previous layer) use Fast Fourier Transform (FFT) to compute $\E(z_{j-1},\omega_1)$:
\be\label{na1}
\E(z_{j-1},\omega_1)=\F\{E(z_{j-1},x,t)\}:=\f{1}{2\pi}\iint e^{-i\omega_1t}E(z_{j-1},x,t)dtdx; 
\ee
\item  II step:  choose the real frequency $\omega_1$ such that $|\E(z_{j-1},\om_1)|$ is maximal;
\item  III step:  choose an imaginary part of the frequency $\om_2=-0.9\om_1$;
\item IV step: use FFT to find analytic continuations of $\E(z_{j-1},\om)$ and $\E_z(z_{j-1},\om)$:
$$
\E(z_{j-1},\omega_1+i\omega_2)=\F\{e^{-\om_2t}E(z_{j-1},x,t\},
$$
$$
\E_z(z_{j-1},\omega_1+i\omega_2)=\F\{e^{-\om_2t}E_z(z_{j-1},x,t\}; 
$$
\item V step: compute $k_j(z_{j-1)}$:
\begin{equation}\label{na3}
k_j(z_{j-1)}=-iq_{j-1}=-i\f{\E_z(z_{j-1},\omega_1+i\omega_2)}{\E(z_{j-1},\omega_1+i\omega_2)}; 
\end{equation}
\item
VI step: use formulas (\ref{forep}) and (\ref{fors}) to find $\ep(j)=\ep_j(z_{j-1})$, $\s(j)=\s_j(z_{j-1})$, and the wave speed $\f{1}{\sqrt{\mu\ep(j)}}$;
\item VII step: to compute the thickness of the layer measure the
time interval $\Delta t_j$ between the two corresponding impulses and
use the wave speed from the previous step;
\item VIII step: choose $\ep(j)$ and $\s(j)$ from step VI, $\om=\om_1$ ($\om_2=0$), and use the differential equation (\ref{stlv}) and known  $\E(z_{j-1},\om_1)$ and $\E_z(z_{j-1},\om_1)$ to compute $\E(z_j,\om_1)$ and $\E_z(z_j,\om_1)$; 
\item IX step: use IFFT (the Fast Inverse Fourier Transform) to find $E(z_j,t)$ and
$E_z(z_j,t)$;
\item X step: rescale time $t\to t-\Delta t$, $E(z_j,t-\Delta t)\to E(z_j,t)$ , $E_z(z_j,t-\Delta t)\to E_z(z_j,t)$.
Rescaling of time is needed in order to get a new time scale, where $t=0$
corresponds to the first arrival of the impulse to the top of the
layer.
\end{itemize}

\section{Numerical results.}

For numerical testing of the method  the Ricker wavelet with the central frequency 200 MHz was used for the original pulse generated by a transmitter. The simulated medium with the total depth $L$ about 84 m was chosen. In the following figures the results of simulation and reconstruction of relative permittivity are presented in two cases. In the first case the conductivity $\s$ is about $10^{-4}$ S/m, and in the second case the conductivity $\s$ is about $10^{-8}$ S/m.  By solving the direct problem synthetic data were generated. In the interval $[0,L]$ 200 points were chosen with the step $\Delta z=0.42$. In the time interval $[0,T]$, $T=0.005$ s., $2^{24}$ points were chosen with the step  $3\cdot10^{-10}$ s. The chosen interval of the frequencies was $[0,\Omega]$, $\Omega=20$ GHz and the step $\Delta\omega=1.22$ kHz.
Figures \ref{fig1}, \ref{fig2}, \ref{fig3} show the original relative permittivity $\ep$ and the results of numerical reconstruction. As it is expected, the lower conductivity allows better reconstruction.
\vspace{1cm}

\begin{figure}[h]
\centering
\subfloat[Relative permittivity $\varepsilon$ simulated.]{\includegraphics[width=5.5cm,height=4cm]{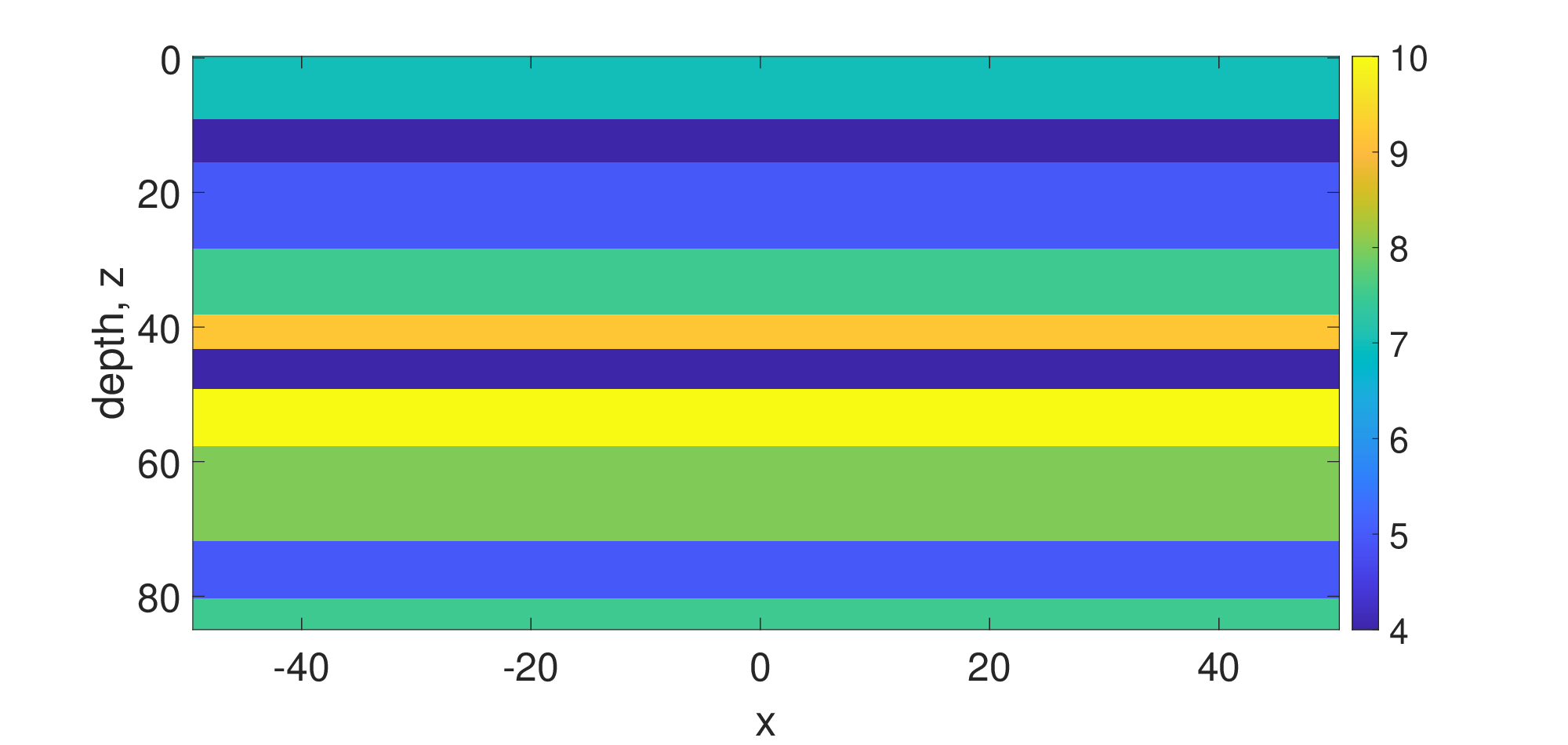}}\hspace{3pt}
\subfloat[Relative permittivity $\varepsilon$ reconstructed.]{\includegraphics[width=5.5cm,height=4cm]{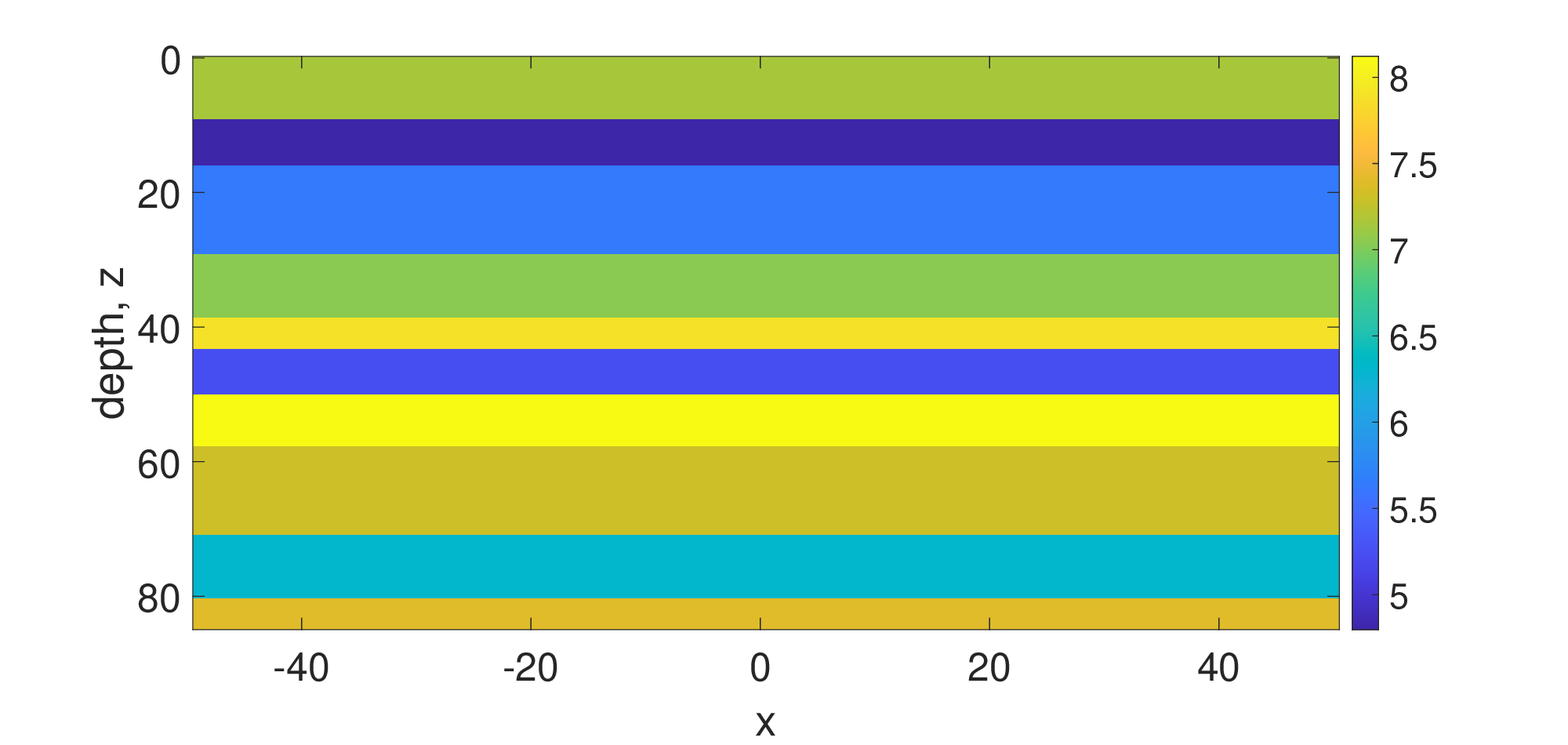}}
\caption{Relative permittivity simulated and reconstructed when the conductivity $\s$ is about $10^{-4}$ S/m. } \label{fig1}
\end{figure}

\begin{figure}[h]
\centering
\subfloat[Relative permittivity $\varepsilon$ simulated.]
{\includegraphics[width=5.5cm,height=4cm]{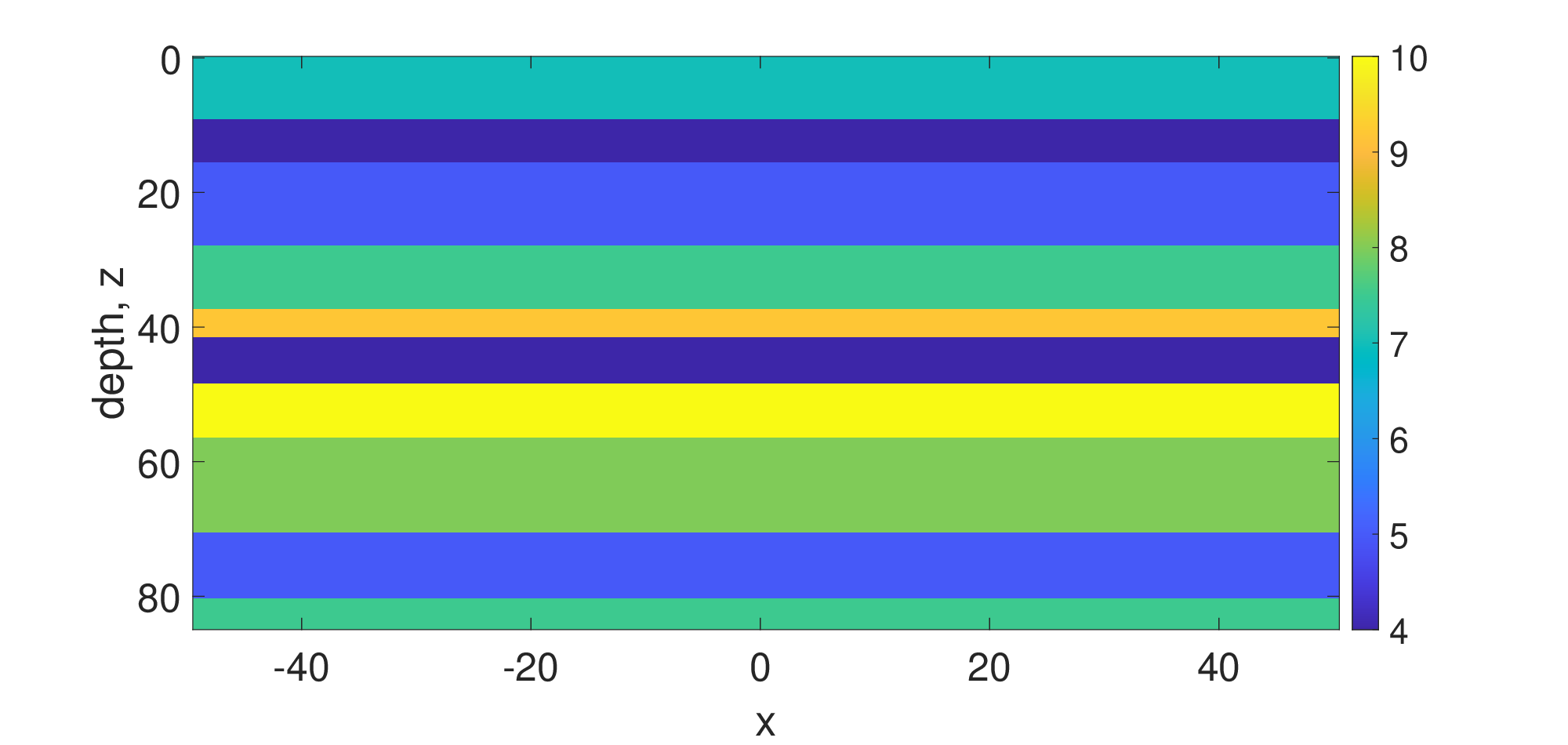}}\hspace{3pt}
\subfloat[Relative permittivity $\varepsilon$ reconstructed.]
{\includegraphics[width=5.5cm,height=4cm]{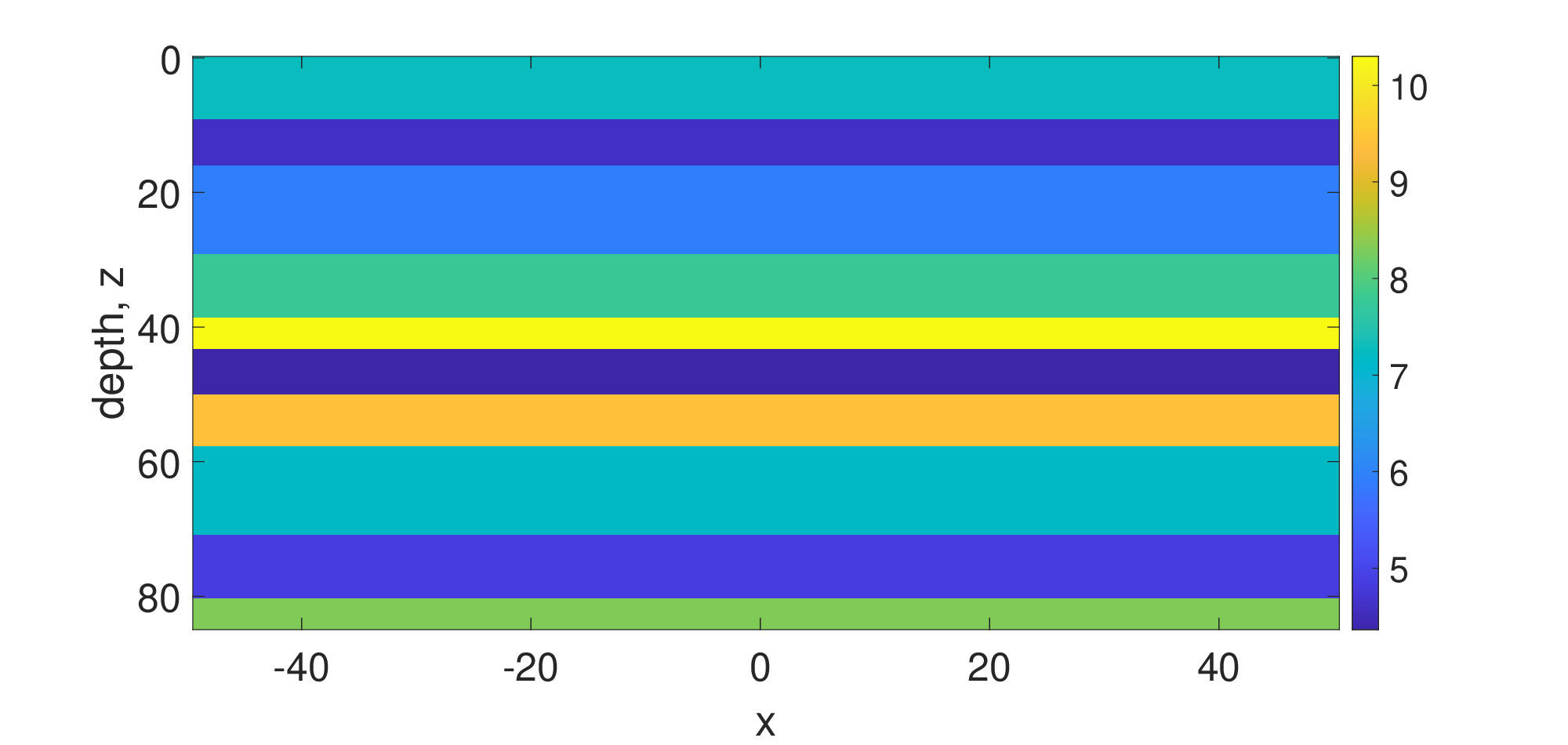}}
\caption{Relative permittivity simulated and reconstructed when the conductivity $\s$ is about $10^{-8}$ S/m. } \label{fig2}
\end{figure}

\begin{figure}[h]
\centering
\subfloat[The conductivity $\s$ is about $10^{-4}$ S/m. ]{\includegraphics[width=5.5cm,height=4cm]{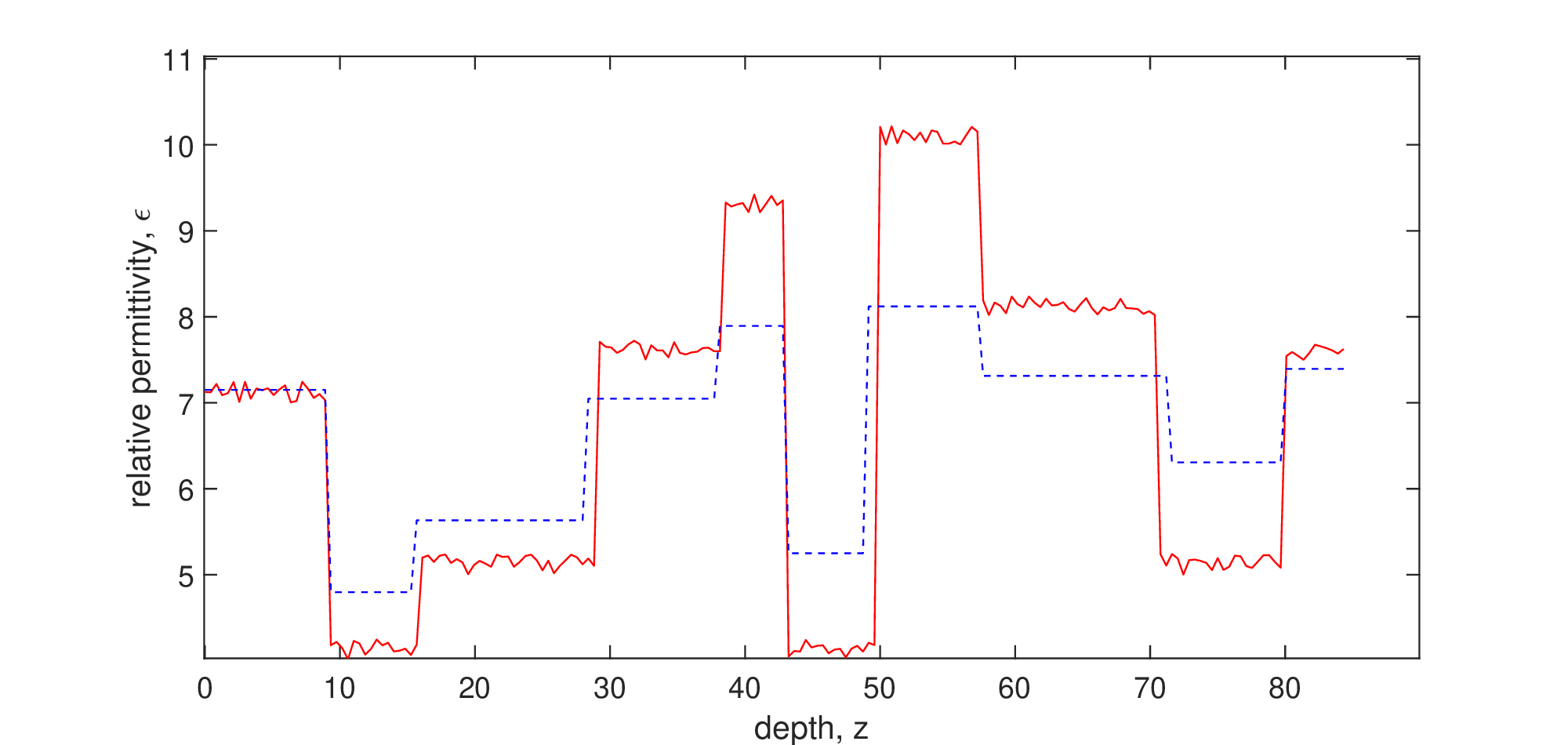}}\hspace{3pt}
\subfloat[The conductivity $\s$ is about $10^{-8}$ S/m. ]{\includegraphics[width=5.5cm,height=4cm]{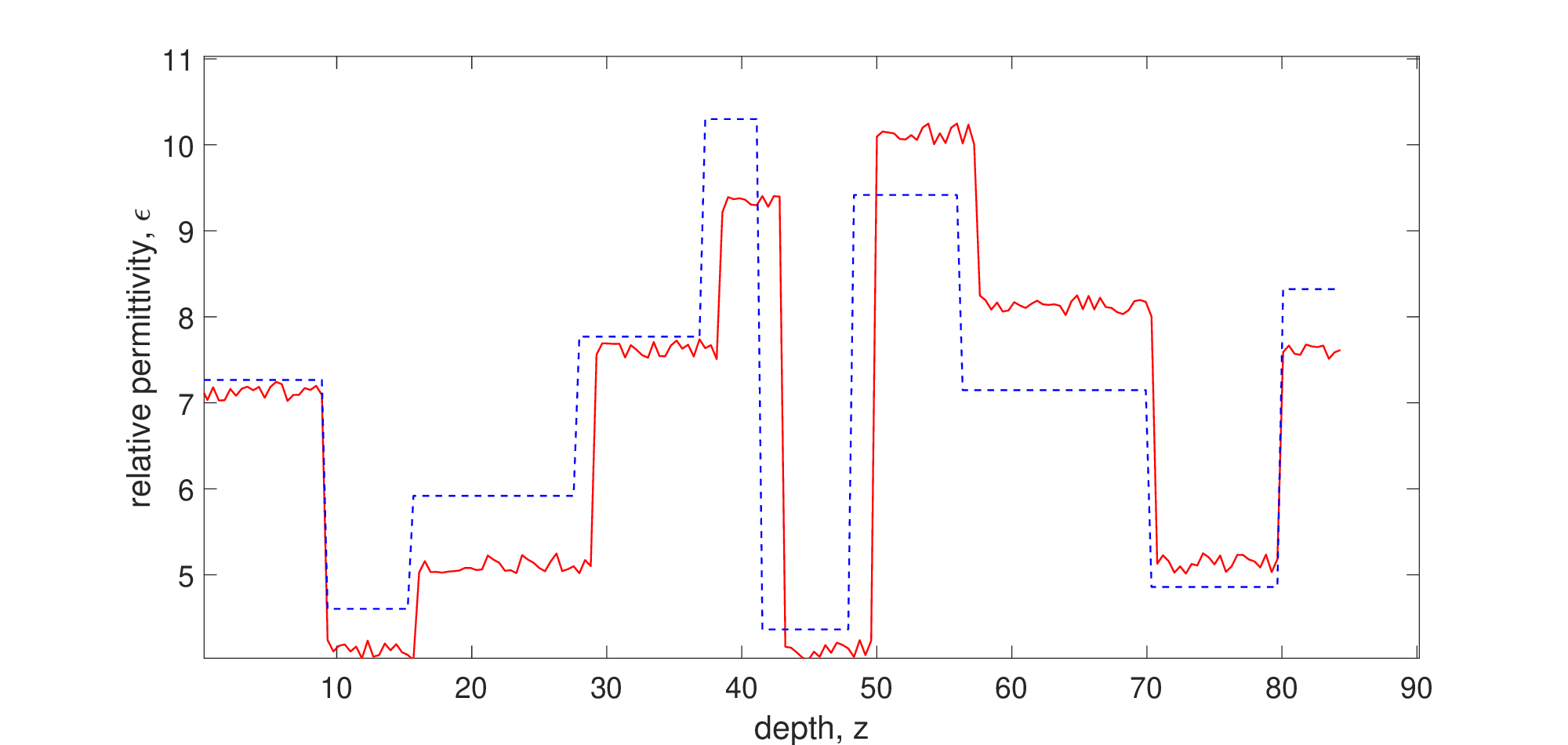}}
\caption{Relative permittivity simulated (red solid line) and reconstructed (blue dashed line). } \label{fig3}
\end{figure}

\section{Conclusion.}

In this article the numerical method for processing the data of ground penetrating radars, proposed in \cite{aa} for a piece-wise constant layered medium, is analyzed and numerically tested for a more realistic piece-wice smooth layered medium. Since electrical characteristics of the medium vary inside the layers, the accuracy of the reconstruction is lower than in the case of a piece-wise constant layered medium. The goal of this paper is to show that the method is still applicable in this case. As a result of a significant ill-posedness of the problem the accuracy of reconstruction of the medium strongly depends on electric characteristics of the medium itself.  The thickness of the layers, conductivity of the medium, and variation of the permittivity inside the layers strongly affect the accuracy of reconstruction. Higher conductivity increases the energy decay and makes results less accurate. Also, very thin layers are impossible to detect, it is well known that to produce an observable reflection the thickness of the layer should be at least of order of the wavelength. 
To justify the numerical effectiveness of the method estimates showing dependence of accuracy of the numerical solution on the properties of the medium are derived (Theorem \ref{thm1}). Numerical experiments also show efficiency of the method under certain assumptions, such as, electrical characteristics inside each layer change slowly, the thickness of layers is, at least, of the order of the wavelength, conductivity of the medium is not high. For the piece-wise continuous medium modelled above numerical errors of reconstruction of the permittivity of the first layer is about 5\% and is increasing for the next layers. However, as it can be seen on Figures 1 and 2 the reconstructed permittivity and the thickness of layers present a good image of the modelled layered medium.



\end{document}